\documentclass[11pt]{article}

\usepackage[T1]{fontenc}
\usepackage{lmodern}
\usepackage{microtype}
\usepackage{amsmath,amssymb,amsthm,mathtools}
\usepackage[margin=1.15in]{geometry}
\usepackage{enumitem}
\usepackage[hidelinks]{hyperref}

\newtheorem{theorem}{Theorem}[section]

\newtheorem{proposition}[theorem]{Proposition}
\newtheorem{corollary}[theorem]{Corollary}
\newtheorem{remark}[theorem]{Remark}

\newcommand{\F}{\mathbb F}
\newcommand{\Z}{\mathbb Z}
\newcommand{\Nul}{\mathcal N}
\newcommand{\Jac}{\operatorname{J}}
\newcommand{\Mat}{\operatorname{M}}
\newcommand{\Tri}{\operatorname{T}}
\newcommand{\cR}{\mathcal R}

\title{Peirce Stability of Null Polynomials and a Sharp Radical Bound for Finite Rings}
\author{Hongfeng Wu\\
College of Science, North China University of Technology, Beijing, China\\
whfmath@gmail.com}
\date{}

\begin{document}

\maketitle

\begin{abstract}
Let $R$ be a finite associative ring with identity, and let $\Nul(R)$ denote
the set of polynomials in a central indeterminate that vanish identically on
$R$ under right evaluation.  We establish two results concerning the failure
of $\Nul(R)$ to be a right ideal.  First, if $e$ is an idempotent, $f=1-e$,
and either $eRf=0$ or $fRe=0$, then right multiplication by $e$ preserves
$\Nul(R)$.  Thus both off-diagonal Peirce components must be nonzero whenever
$e$ witnesses a failure of right stability.  Second, if $J$ is a finite
nilpotent ideal whose additive group is a $2$-group and $e,f$ are complementary
idempotents such that
\[
 eJf\neq0,\qquad fJe\neq0,\qquad J^3\neq0,
\]
then $|J|\geq32$.  Werner's theorem that $\Jac(R)^3=0$ implies the
two-sidedness of $\Nul(R)$ therefore yields $|R|\geq128$ whenever $\Nul(R)$
is not two-sided.  We also construct a tiled matrix ring of characteristic
$4$ and order $128$ whose null ideal is not two-sided.
\end{abstract}

\noindent\textbf{Keywords.}
Finite ring; null polynomial; right evaluation; Peirce decomposition;
Jacobson radical.

\noindent\textbf{2020 Mathematics Subject Classification.}
16S36, 16N20, 16P10.

\section{Introduction}

Throughout, all rings are associative and have an identity.  Let $R$ be a
finite ring, and let $R[x]$ be the polynomial ring in a central indeterminate.
For $F(x)=\sum_i c_i x^i$, its right value at $a\in R$ is
$F(a)=\sum_i c_i a^i$.  Set
\[
 \Nul(R)=\{F\in R[x]:F(a)=0\text{ for every }a\in R\}.
\]
The set $\Nul(R)$ is always a left ideal of $R[x]$, but it need not be a right
ideal: substitution in a polynomial with noncommuting coefficients is not, in
general, multiplicative.

Werner conjectured that $\Nul(R)$ is two-sided for every finite ring
\cite{Werner2014}.  Havlovec disproved the conjecture by constructing a ring of
order $128$ for which $\Nul(R)$ is not two-sided \cite{Havlovec2026}.
Werner subsequently proved that $\Nul(R)$ is two-sided whenever
$\Jac(R)^3=0$ and constructed counterexamples whose Jacobson radicals have
arbitrarily large nilpotency index \cite{Werner2026}.

Our first result concerns the triangular Peirce decompositions determined by
an idempotent $e$: if either $eR(1-e)$ or $(1-e)Re$ is zero, then right
multiplication by $e$ preserves $\Nul(R)$.  The second result is a sharp
cardinality bound for a nilpotent ideal $J$: if both off-diagonal Peirce
components of $J$ are nonzero and $J^3\neq0$, then $|J|\geq32$.  Combining
these results with the Wedderburn decomposition of $R/\Jac(R)$ shows that the
least possible order of a finite ring with a non-two-sided null ideal is
$128$.

The bound is attained in both characteristics $2$ and $4$.  In addition to
Havlovec's characteristic-$2$ example, we construct a tiled subring of
$\Mat_2(\Z/4\Z)$ and exhibit a null polynomial whose right multiple by an
idempotent is not null.  The two examples have the same order but different
characteristics; in particular, the characteristic-$2$ example is not a
quotient of the characteristic-$4$ example.

\section{Preliminaries}

Write $J=\Jac(R)$.  By the Artin--Wedderburn theorem, the semisimple quotient
admits a decomposition
\begin{equation}\label{eq:semisimple}
 R/J\cong\prod_{i=1}^{t}\Mat_{n_i}(K_i),
\end{equation}
where the $K_i$ are finite fields.  The identities of the simple factors lift
to pairwise orthogonal idempotents $e_1,\ldots,e_t\in R$ with
$e_1+\cdots+e_t=1$.

We use three standard facts.  First, if $U\in R^\times$ and $F\in\Nul(R)$,
then $FU\in\Nul(R)$ \cite[Lemma~3.5]{Werner2014}.  Second, every element of
$R$ is a sum of units if and only if $R/J$ has no direct factor isomorphic to
$\F_2\times\F_2$ \cite[Theorem~4.6]{Stewart1972}
\cite[Proposition~2.3]{Werner2026}.  Third, $\Nul(R)$ is two-sided if and only
if
\[
 Fe_i\in\Nul(R)
 \quad\text{for every }F\in\Nul(R)\text{ and every }i
\]
\cite[Lemma~2.4]{Werner2026}.  The cited unit-sum results also imply that each
$e_i$ corresponding to a simple factor not isomorphic to $\F_2$ is a sum of
units.  Hence, if $\Nul(R)$ is not two-sided, then $R/J$ has at least two
simple factors isomorphic to $\F_2$, and there exist an idempotent $e$ lifting
the identity of one such factor and a polynomial $F\in\Nul(R)$ such that
\begin{equation}\label{eq:witness}
 Fe\notin\Nul(R).
\end{equation}

We shall also use the primary decomposition of a finite ring.  If
$R=R_1\times\cdots\times R_s$, then right evaluation is componentwise, and
\begin{equation}\label{eq:product-null}
 \Nul(R)=\Nul(R_1)\times\cdots\times\Nul(R_s).
\end{equation}
The additive primary components of a finite ring are two-sided ideals and form
such a direct product.  Since the null ideal of every finite ring of odd order
is two-sided \cite[Theorem~3.7]{Werner2014}, any ring with a non-two-sided null
ideal has a $2$-primary direct factor with the same property.

\section{Right stability in triangular Peirce decompositions}

\begin{theorem}\label{thm:triangular}
Let $R$ be a ring, let $e^2=e$, and put $f=1-e$.  If either $eRf=0$ or
$fRe=0$, then
\[
 Fe\in\Nul(R)\qquad\text{for every }F\in\Nul(R).
\]
\end{theorem}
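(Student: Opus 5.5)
The plan is to use the Peirce decomposition with respect to $e$ and $f=1-e$, and to observe that in a triangular situation the relevant one-sided Peirce "row" of each power $a^i$ is simply a power of a corner element. Write $F(x)=\sum_i c_i x^i\in\Nul(R)$. For any $a\in R$ the right value of $Fe$ is
\[
 (Fe)(a)=\sum_i c_i\, e\,a^i ,
\]
and similarly $(Ff)(a)=\sum_i c_i\, f\,a^i$. Since $F=Fe+Ff$ and $\Nul(R)$ is closed under addition, it suffices to show that either $Fe$ or $Ff$ lies in $\Nul(R)$. We choose which one according to the case.

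\emph{Case $eRf=0$.} Here $ea=ea(e+f)=eae$ for every $a$. Put $p=eae$. By induction, $ea^{i}=p^{i}$ for $i\geq1$: indeed $ea^{i+1}=(ea^i)a=p^i a=p^{i}ea=p^ip=p^{i+1}$, using $p^i=p^ie$. For $i=0$ we have $ea^0=e$. We also have $p^ie=p^i$ for $i\ge1$ and $p^0e=e$. Hence
\[
 (Fe)(a)=c_0e+\sum_{i\ge1}c_ip^i=\Bigl(\sum_i c_i p^i\Bigr)e=F(p)\,e=0,
\]
because $F\in\Nul(R)$ vanishes at $p\in R$. So $Fe\in\Nul(R)$ directly.

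\emph{Case $fRe=0$.} Now $fa=fa(e+f)=faf$. The identical computation with $f$ in place of $e$ and $s=faf$ gives $fa^i=s^i$ for $i\ge1$. It follows that $(Ff)(a)=F(s)f=0$, so $Ff\in\Nul(R)$. Consequently $Fe=F-Ff\in\Nul(R)$.

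The only point requiring care, and the step I would check most carefully, is the constant term. Right evaluation uses the convention $a^0=1$, so the $i=0$ term contributes $c_0e$ rather than $c_0p^0$. The factorization $(Fe)(a)=F(p)e$ absorbs this correctly: it uses $p^0e=e$ together with $p^ie=p^i$ for $i\ge1$. Beyond that, the argument needs only the one-line induction on $i$ and the additive decomposition $F=Fe+Ff$. The second case in particular relies on this decomposition, because there the "good" corner is $f$ rather than $e$.
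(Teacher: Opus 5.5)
Your proof is correct. It differs from the paper's in how the case $fRe=0$ is handled.

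For $eRf=0$ you argue essentially as the paper does. The paper evaluates $F$ at the diagonal part $eae+faf$ of the element and reads off the left Peirce column of the result. You evaluate at $p=eae$ alone and use $ea^i=p^i$; this is the same observation.

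For $fRe=0$, the paper works directly with $Fe$:
\begin{itemize}
\item it writes the element and its powers in upper-triangular block form, with off-diagonal block $q_i$ in $r^i$;
\item it uses vanishing at the diagonal part to get $\sum_i\mu_ib^i=0$;
\item it then extracts $\sum_i\alpha_iq_i=0$ from the upper-right block of $F(r)=0$, which is exactly the nonzero block of $(Fe)(r)$.
\end{itemize}

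You instead observe that $fRe=0$ is the hypothesis of your first case for the idempotent $f$ (whose complement is $e$). The same one-line lemma then gives $Ff\in\Nul(R)$, and $Fe=F-Ff$ finishes the proof.

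Your route is shorter and coordinate-free. It reduces the whole theorem to a single lemma: if $g^2=g$ and $gR(1-g)=0$, then $(Fg)(a)=F(gag)\,g$. Together with the identity $F=Fe+Ff$, this shows in passing that under either hypothesis both $Fe$ and $Ff$ are null. The paper's block computation is more laborious, but it shows explicitly which Peirce blocks of $(Fe)(r)$ vanish and why, including how the off-diagonal contributions $q_i$ are controlled.
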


\begin{proof}
Assume first that $fRe=0$.  For an arbitrary $r\in R$, Peirce block notation
gives
\[
 r=\begin{pmatrix}a&m\\0&b\end{pmatrix},
 \qquad
 c_i=\begin{pmatrix}\alpha_i&\mu_i\\0&\beta_i\end{pmatrix},
 \qquad
 F(x)=\sum_{i=0}^{d}c_ix^i.
\]
Here $a,\alpha_i\in eRe$, $b,\beta_i\in fRf$, and
$m,\mu_i\in eRf$.
For $i\geq1$, set
\[
 q_i=\sum_{j=0}^{i-1}a^jmb^{\,i-1-j},
 \qquad q_0=0.
\]
Then
\[
 r^i=\begin{pmatrix}a^i&q_i\\0&b^i\end{pmatrix}.
\]
Since $F$ vanishes at
$r_0=\begin{psmallmatrix}a&0\\0&b\end{psmallmatrix}$, its upper-right block
gives
\begin{equation}\label{eq:offdiag-coeff}
 \sum_i\mu_i b^i=0.
\end{equation}
The upper-left and upper-right blocks of $F(r)=0$ are
\[
 \sum_i\alpha_i a^i=0,
 \qquad
 \sum_i(\alpha_iq_i+\mu_i b^i)=0.
\]
By \eqref{eq:offdiag-coeff}, $\sum_i\alpha_iq_i=0$.  Since
\[
 c_i e=\begin{pmatrix}\alpha_i&0\\0&0\end{pmatrix},
\]
it follows that
\[
 (Fe)(r)=
 \begin{pmatrix}
  \sum_i\alpha_i a^i&\sum_i\alpha_iq_i\\0&0
 \end{pmatrix}=0.
\]

Now suppose that $eRf=0$.  For arbitrary $r\in R$, write
\[
 r=\begin{pmatrix}a&0\\n&b\end{pmatrix},
 \qquad
 c_i=\begin{pmatrix}\alpha_i&0\\\nu_i&\beta_i\end{pmatrix}.
\]
Here $a,\alpha_i\in eRe$, $b,\beta_i\in fRf$, and
$n,\nu_i\in fRe$.  Evaluating $F$ at the diagonal element
$r_0=\begin{psmallmatrix}a&0\\0&b\end{psmallmatrix}$ gives
\[
 \sum_i\alpha_i a^i=0,
 \qquad
 \sum_i\nu_i a^i=0.
\]
As
\[
 c_i e=\begin{pmatrix}\alpha_i&0\\\nu_i&0\end{pmatrix},
\]
we have
\[
 (Fe)(r)=
 \begin{pmatrix}
  \sum_i\alpha_i a^i&0\\
  \sum_i\nu_i a^i&0
 \end{pmatrix}=0
\]
for every $r\in R$.
\end{proof}

\begin{corollary}\label{cor:bidirectional}
If $e$ and $F$ satisfy \eqref{eq:witness}, then
\[
 eR(1-e)\neq0
 \qquad\text{and}\qquad
 (1-e)Re\neq0.
\]
\end{corollary}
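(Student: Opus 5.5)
This is the contrapositive of Theorem~\ref{thm:triangular}, so I will argue by contradiction. Suppose $e$ is an idempotent and $F\in\Nul(R)$ satisfy \eqref{eq:witness}, that is, $Fe\notin\Nul(R)$. Suppose also, for a contradiction, that at least one of $eR(1-e)$ and $(1-e)Re$ vanishes.

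Put $f=1-e$. Theorem~\ref{thm:triangular} requires only that $e$ be an idempotent of $R$ and that either $eRf=0$ or $fRe=0$. In that case it gives $Fe\in\Nul(R)$ for every $F\in\Nul(R)$, and in particular for the given witness $F$. This contradicts $Fe\notin\Nul(R)$. Hence both $eR(1-e)\neq0$ and $(1-e)Re\neq0$.

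There is no real obstacle here. The one point to check is that the $e$ in \eqref{eq:witness} is a genuine idempotent of $R$. It is, because it was chosen as a lift of the identity of a simple factor $\F_2$ of $R/J$, and such lifts are idempotents. The extra structure of $e$, namely that it lifts an $\F_2$ factor, is not needed for this corollary.
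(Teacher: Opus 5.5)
Your proof is correct and matches the paper's argument: the corollary is the contrapositive of Theorem~\ref{thm:triangular}, applied to the idempotent $e$ and the witness $F$ from \eqref{eq:witness}. The only hypothesis to check is that $e$ is an idempotent of $R$, and, as you note, it is, being the chosen lift of the identity of an $\F_2$ factor of $R/J$.
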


\begin{proof}
If either displayed Peirce component were zero, Theorem~\ref{thm:triangular}
would give $Fe\in\Nul(R)$, contrary to \eqref{eq:witness}.
\end{proof}

\section{A sharp cardinality bound}

\begin{theorem}\label{thm:cardinality}
Let $J$ be a finite nilpotent ideal of a ring $R$, and suppose that its
additive group is a $2$-group.  Let $e^2=e$, put $f=1-e$, and set
\[
 A=eJe,\qquad M=eJf,\qquad N=fJe,\qquad B=fJf.
\]
If
\[
 M\neq0,\qquad N\neq0,\qquad J^3\neq0,
\]
then
\[
 |J|=|A|\,|M|\,|N|\,|B|\geq32.
\]
\end{theorem}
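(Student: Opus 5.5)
The approach is to filter $J$ by its powers and count the successive quotients, using the Peirce decomposition at each layer. Since $e$ and $f$ act on every power $J^k$ by multiplication, each $J^k$ is a two-sided ideal that decomposes as $eJ^ke\oplus eJ^kf\oplus fJ^ke\oplus fJ^kf$. The quotients $J^k/J^{k+1}$ inherit this decomposition.

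\emph{Step 1: the filtration is strict.} Because $J$ is nilpotent, $J^k=J^{k+1}$ forces $J^k=J^{k+m}$ for all $m$, and hence $J^k=0$. From $J^3\neq0$ we get strict inclusions $J\supsetneq J^2\supsetneq J^3\supsetneq0$. Therefore
\[
|J|=|J/J^2|\,|J^2/J^3|\,|J^3|,
\]
and every factor is a nontrivial $2$-group, so $|J|\geq 8$. The identity $|J|=|A||M||N||B|$ is just the Peirce decomposition.

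\emph{Step 2: both off-diagonal pieces survive in $J/J^2$.} Suppose $M\subseteq J^2$. Let $D=eJe+fJf$, so that $J=D+M+N$. For diagonal blocks we have $eJe\cdot fJf=0$ and $fJf\cdot eJe=0$, so every power $D^k$ is diagonal. I will show by induction that $M\subseteq eJ^kf$ for all $k$, which gives $M=0$. The obstacle is that $N$ is not assumed to lie in $J^2$, so products involving $N$ must be controlled. This works because any product containing a factor from $M$, or two factors from $N$, gains depth.

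The argument is therefore run symmetrically. If the image of $M$ in $J/J^2$ is zero, the degree-one generators of the associated graded ring $\bigoplus J^k/J^{k+1}$ lie in $D\oplus N$. A product of such generators lands in $eJ^kf$ only if it is zero: $N$-factors carry $e$ to $f$ only on the left side, and a word mapping the $f$-side to the $e$-side is impossible without an $M$-factor. Hence the $eJf$-component of every graded piece vanishes. Since $J^n=0$, this gives $M=0$, a contradiction.

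So $eJf\not\subseteq eJ^2f$, and symmetrically $fJe\not\subseteq fJ^2e$. Consequently $|J/J^2|\geq4$ and $|J|\geq16$.

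\emph{Step 3: ruling out $|J|=16$.} In the equality case, $|J/J^2|=4$ with only off-diagonal components, each of order $2$, and $|J^2/J^3|=|J^3|=2$. Choose $m\in M$ and $n\in N$ whose images generate these components. Then $J=\Z m+\Z n+J^2$.

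Since $J^2=J\cdot J$ is additively spanned by products, $J^2$ is spanned modulo $J^3$ by $mm$, $mn$, $nm$ and $nn$. Here $mm=m(fe)m=0$ and $nn=0$. The elements $mn\in eJe$ and $nm\in fJf$ lie in different Peirce blocks, and $|J^2/J^3|=2$, so exactly one of them survives. Say $nm\in J^3$; the other case is symmetric.

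Similarly, $J^3$ is spanned modulo $J^4$ by $mnm$ and $nmn$, because the other words contain $mm$ or $nn$. But $mnm=m(nm)\in J^4$ and $nmn=(nm)n\in J^4$. Hence $J^3=J^4$, which forces $J^3=0$, a contradiction. Therefore $|J|\geq32$.

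The main care is needed in Step 2, which requires a clean graded or inductive formulation. The remaining bookkeeping is routine: integer multiples such as $2m\in J^2$ are harmless, and the additive spanning of $J^2$ and $J^3$ by products of generators follows from expanding the products.
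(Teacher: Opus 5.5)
Your proof is correct, but it follows a different route from the paper.

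The paper fixes Peirce-homogeneous $x\in i_0Ji_1$, $y\in i_1Ji_2$, $z\in i_2Ji_3$ with $xyz\neq0$. It reduces the sixteen index words to six orbits under swapping $e,f$ and passing to the opposite ring, and then bounds individual corners case by case. The tools are that $at\neq t$ for nonzero $t,at$ in one corner and, in two cases, that a corner of order $2$ has a unique nonzero element.

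You instead use the filtration $J\supsetneq J^2\supsetneq J^3\supsetneq0$. Your key point is that if $M\subseteq J^2$, every nonzero word in $A\cup B\cup N$ has an index path using only the steps $e\to e$, $f\to f$, $f\to e$. Hence $eJ^kf=eJ^{k+1}f$ for all $k\geq1$, so $M=0$. Your verbal description of this in Step~2 is loosely phrased, and the preliminary sketch about ``two factors from $N$'' is superfluous; the index-path statement is all you need. This gives $|J/J^2|\geq4$, and your generator computation correctly shows that the equality case $|J|=16$ would force $J^3=J^4$.

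What your route buys:
\begin{itemize}
\item It avoids the case analysis entirely.
\item It never uses the prime $2$. It shows that $(J,+)$ has composition length at least $5$, hence $|J|\geq p^5$ for any $p$-group.
\item It shows that, under the hypotheses, $|J/J^2|=4$ forces $|J^2/J^3|=4$, matching the graded orders $4,4,2$ of the example $S$.
\end{itemize}
The paper's argument, as written, depends on $p=2$. In exchange, it shows where the size must sit among the Peirce corners for each shape of the witnessing product; for instance, it gives $|J|\geq64$ for the word $0110$.
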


\begin{proof}
The product formula follows from the additive Peirce decomposition
$J=A\oplus M\oplus N\oplus B$.  Since $J^3\neq0$, choose
$x_0,y_0,z_0\in J$ such that $x_0y_0z_0\neq0$.  Expanding each factor into
its Peirce components shows that at least one summand of the expanded product
is nonzero.  Hence there are indices $i_0,i_1,i_2,i_3\in\{e,f\}$ and elements
\[
 x\in i_0Ji_1,\qquad y\in i_1Ji_2,
 \qquad z\in i_2Ji_3
\]
such that $xyz\neq0$.  Thus $xy$ and $yz$ are nonzero.

We shall repeatedly use the following observation.  If $a,t\in J$ and $t,at$
are nonzero elements of the same Peirce component, then $at\neq t$.  Indeed,
$at=t$ would imply $a^kt=t$ for every $k\geq1$, whereas $a^k=0$ for all
sufficiently large $k$ because $J$ is nilpotent.  The same argument applies to
$t$ and $ta$.  Each Peirce component is an additive subgroup of the finite
$2$-group $J$.  It follows that a component containing two distinct nonzero
elements has order at least $4$.

Encode $e$ by $0$ and $f$ by $1$.  Interchanging $e$ and $f$ complements the
index word $i_0i_1i_2i_3$, whereas passage to the opposite ring reverses it.
The hypotheses and the required cardinality bound are invariant under both
operations.  The complement-reversal orbits in $\{0,1\}^4$ therefore have the
following six representatives.

\begin{enumerate}[
 label=\textup{(\roman*)},leftmargin=*,
 itemsep=.35\baselineskip,topsep=.4\baselineskip,parsep=0pt
]
\item For \(0000\), one has $A^3\neq0$.  The chain
$A\supsetneq A^2\supsetneq A^3\supsetneq0$ is strict.  Indeed, $A=A^2$ would
give $A=A^k$ for every $k\geq1$, while $A^2=A^3$ would give $A^2=A^k$ for
every $k\geq2$; either equality contradicts nilpotence and $A^3\neq0$.
Hence $|A|\geq8$.
Since $M$ and $N$ are nonzero,
$|M|,|N|\geq2$, and therefore $|J|\geq8\cdot2\cdot2=32$.

\item For \(0001\), one has $x,y\in A$ and $z\in M$.  The elements $y$ and
$xy$ are distinct and nonzero, so $|A|\geq4$.  Moreover, $z$ and $yz$ are
distinct nonzero elements of $M$, so $|M|\geq4$.  Together with
$|N|\geq2$, this gives $|J|\geq32$.

\item For \(0010\), one has $x\in A$, $y\in M$, and $z\in N$.  The elements
$y$ and $xy$ are distinct and nonzero, hence $|M|\geq4$.  The elements
$x$, $yz$, and $x(yz)=xyz$ are nonzero and lie in $A$.  If $|A|=2$, then
$A$ has a unique nonzero element, so all three elements coincide.  It follows
that $x=yz=x(yz)=x^2$, and hence $x$ is a nonzero idempotent of the nilpotent
ring $J$, a contradiction.  Thus $|A|\geq4$, and $|J|\geq32$.

\item For \(0011\), one has $x\in A$, $y\in M$, and $z\in B$.  The elements
$y$ and $xy$ are distinct and nonzero, so $|M|\geq4$.  Since $A$, $B$, and
$N$ are nonzero, $|J|\geq2\cdot4\cdot2\cdot2=32$.

\item For \(0101\), one has $x,z\in M$ and $y\in N$.  The components
$A$, $B$, and $N$ are nonzero because they contain $xy$, $yz$, and $y$,
respectively.  If $|M|=2$, then $M$ has a unique nonzero element, so $x$,
$z$, and $xyz$ coincide; denote their common value by $m$.  With $a=xy\in A$,
we obtain $am=(xy)m=xyz=m$, contrary to the preceding observation.  Hence
$|M|\geq4$ and $|J|\geq32$.

\item For \(0110\), one has $x\in M$, $y\in B$, and $z\in N$.  The pairs
$x,xy$ and $z,yz$ consist of distinct nonzero elements, so
$|M|,|N|\geq4$.  Moreover, $B\neq0$ because $y\in B$, whereas $A\neq0$
because $xyz\in A$.  Consequently,
$|J|\geq2\cdot4\cdot4\cdot2=64$.
\end{enumerate}
These six cases exhaust the complement-reversal orbits of the index words.
\end{proof}

\section{The order bound and examples attaining it}

\begin{theorem}\label{thm:order}
If $R$ is a finite ring and $\Nul(R)$ is not a two-sided ideal of $R[x]$, then
\[
 |R|\geq128.
\]
\end{theorem}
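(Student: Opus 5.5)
We first reduce to the $2$-primary case and then apply Theorem~\ref{thm:cardinality} to $J=\Jac(R)$, together with a lower bound on $|R/J|$.

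\emph{Reduction.} Suppose $\Nul(R)$ is not two-sided. By \eqref{eq:product-null} and the remark following it, some $2$-primary direct factor $R_2$ of $R$ also has a non-two-sided null ideal. Since $|R|\geq|R_2|$, we may assume that $R$ has $2$-power order. In particular, $J=\Jac(R)$ is a finite nilpotent ideal whose additive group is a $2$-group.

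\emph{Choice of the idempotent.} By the preliminaries, $R/J$ has at least two simple factors isomorphic to $\F_2$. Moreover, there is an idempotent $e$ lifting the identity of one such factor, and an $F\in\Nul(R)$, with $Fe\notin\Nul(R)$ as in \eqref{eq:witness}. Werner's theorem (two-sidedness when $\Jac(R)^3=0$) gives $J^3\neq0$.

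\emph{The off-diagonal components lie in $J$.} Corollary~\ref{cor:bidirectional} gives $eR(1-e)\neq0$ and $(1-e)Re\neq0$, but Theorem~\ref{thm:cardinality} needs $eJf\neq0$ and $fJe\neq0$ with $f=1-e$. The point is that $eRf$ maps into $\bar e(R/J)\bar f$. Here $\bar e$ is the identity of a simple direct factor of $R/J$, and $\bar f$ is the sum of the identities of the other factors. These are central orthogonal idempotents of $R/J$, so $\bar e(R/J)\bar f=0$. Hence $eRf\subseteq J$, which gives $eRf=eJf$, and likewise $fRe=fJe$. Thus $M,N\neq0$, and Theorem~\ref{thm:cardinality} yields $|J|\geq32$.

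\emph{Conclusion.} Since $R/J$ has at least two factors $\F_2$, we have $|R/J|\geq4$. Therefore
\[
|R|=|J|\,|R/J|\geq32\cdot4=128.
\]

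The step needing the most care is the identification $eRf=eJf$. It uses that $e$ lifts the identity of a whole simple factor, so that $\bar e$ is central in $R/J$. This is exactly how $e$ was chosen in the preliminaries: it lifts the identity of a factor $\F_2$, not merely some idempotent. Everything else is direct bookkeeping from the earlier results.
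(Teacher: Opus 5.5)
Your proposal is correct and follows the paper's proof essentially step for step. Both arguments reduce to the $2$-primary factor, take the witness idempotent $e$ lifting an $\F_2$ factor, and invoke Werner's theorem to get $J^3\neq0$. Both then use centrality of $\bar e$ in $R/J$ to identify $eRf=eJf$ and $fRe=fJe$, and finish with Theorem~\ref{thm:cardinality} and $|R/J|\geq4$.
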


\begin{proof}
By \eqref{eq:product-null}, $R$ has a $2$-primary direct factor whose null
ideal is not two-sided.  The order of this factor does not exceed $|R|$, so it
suffices to treat the case in which the additive group of $R$, and hence that
of $J=\Jac(R)$, is a $2$-group.  The results recalled in Section~2 show that
$R/J$ has at least two simple factors isomorphic to $\F_2$; thus $|R/J|\geq4$.
They also provide $e$ and $F$ satisfying \eqref{eq:witness}.  Put $f=1-e$.  By
Corollary~\ref{cor:bidirectional},
\[
 eRf\neq0,\qquad fRe\neq0.
\]
The image of $e$ is the identity of a direct factor of $R/J$ and is therefore
central.  It follows that $eRf\subseteq J$ and $fRe\subseteq J$, whence
$eRf=eJf$ and $fRe=fJe$.  Because $\Nul(R)$ is not two-sided, Werner's theorem
\cite[Theorem~1.2]{Werner2026} implies that $J^3\neq0$.
Theorem~\ref{thm:cardinality} now gives $|J|\geq32$, and
therefore
\[
 |R|=|R/J|\,|J|\geq4\cdot32=128.
\]
\end{proof}

The bound is attained by the following characteristic-$2$ ring constructed by
Havlovec:
\[
 \cR_4=
 \left\{
 \begin{pmatrix}C&D\\0&C\end{pmatrix}:
 C\in\Tri_2(\F_2),\ D\in\Mat_2(\F_2)
 \right\},
\]
where $\Tri_2(\F_2)$ denotes the ring of upper triangular $2\times2$ matrices
over $\F_2$ \cite{Havlovec2026}.  We next construct an example of
characteristic $4$.  Let $\Z_4=\Z/4\Z$ and
\begin{equation}\label{eq:S}
 S=\left\{
 \begin{pmatrix}a&b\\c&d\end{pmatrix}\in\Mat_2(\Z_4):
 b\in2\Z_4
 \right\}.
\end{equation}
Then $|S|=128$ and $\operatorname{char}(S)=4$.

\begin{proposition}\label{prop:S}
The null ideal $\Nul(S)$ is not two-sided.
\end{proposition}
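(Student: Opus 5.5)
We will exhibit an explicit witness: a polynomial $F\in\Nul(S)$ and the idempotent $e=\begin{psmallmatrix}1&0\\0&0\end{psmallmatrix}\in S$ such that $Fe\notin\Nul(S)$. First we record the structure of $S$, both to guide the search and to check that the example is consistent with Section~4. Reduction modulo $2$ maps $S$ onto $\Tri_2^{\mathrm{low}}(\F_2)$, the lower triangular $2\times2$ matrices over $\F_2$. The Jacobson radical is
\[
\Jac(S)=\Bigl\{\begin{psmallmatrix}a&b\\c&d\end{psmallmatrix}: a,b,d\in2\Z_4\Bigr\},
\]
so $S/\Jac(S)\cong\F_2\times\F_2$, $|\Jac(S)|=32$, $eSf=2\Z_4\neq0$ and $fSe=\Z_4\neq0$. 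One also checks that $\Jac(S)^3\neq0$; for instance $\begin{psmallmatrix}0&2\\0&0\end{psmallmatrix}\begin{psmallmatrix}0&0\\1&0\end{psmallmatrix}\begin{psmallmatrix}0&2\\0&0\end{psmallmatrix}$ is computed directly. This is the situation of Theorem~\ref{thm:order}, and by Corollary~\ref{cor:bidirectional} both off-diagonal components are genuinely needed.

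For the witness we look for $F$ of the shape
\[
F(x)=\sum_k c_k\,g_k(x),
\]
where each $g_k\in\Z_4[x]$ has central (scalar) coefficients and $c_k\in S$. Then $F(r)=\sum_k c_k\,g_k(r)$ and $(Fe)(r)=\sum_k c_ke\,g_k(r)$, where $g_k(r)$ is an ordinary matrix polynomial value. The natural building blocks are the scalar polynomials $p(x)=x^2-x$, its powers, and $2p(x)$ and $2p(x)^2$. For every $r\in S$ the value $p(r)$ lies in $\Jac(S)$, because $S/\Jac(S)$ is a product of copies of $\F_2$. Hence $p(r)^3$, $2p(r)^2$ and similar values lie in small subspaces that can be listed explicitly. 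Indeed $2\Jac(S)=\{\begin{psmallmatrix}0&0\\2c&0\end{psmallmatrix}\}$, and the cubes of $\Jac(S)$ are similarly constrained.

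The plan is to compute, for a generic $r=\begin{psmallmatrix}a&b\\c&d\end{psmallmatrix}$ with $b=2b'$, the matrices $u(r)=p(r)^2$, $v(r)=2p(r)$ and so on, as functions of the residues of $a,d$ modulo $2$ and of $c,b'$. We then choose coefficients, for example $F=c_1u+c_2v$, so that the left multiplications by $c_1$ and $c_2$ kill the parts of $u(r)$ and $v(r)$ that do not cancel. The cancellation $c_1u(r)+c_2v(r)=0$ should hold for all $r$ only because the first column of $u(r)$ is matched against a term in the second column, passing through the $eSf$ and $fSe$ components. Right multiplication by $e$ deletes the compensating second-column contribution, so some value $c_1e\,u(r_*)$ survives. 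We will check $F\in\Nul(S)$ by splitting into the four cases given by $(a\bmod 2,\ d\bmod 2)$. Within each case everything is a polynomial identity in $c$ and $b'$ modulo $4$, and finally we exhibit one explicit $r_*$, presumably with $c$ odd and $b'$ odd, at which $(Fe)(r_*)\neq0$.

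The main obstacle is finding the right coefficients $c_k$ and exponents. The mixed case $a\equiv1$, $d\equiv0$ (or its reverse) is where $p(r)$ has nonzero off-diagonal entries and cancellation is delicate. I would first try $F=\begin{psmallmatrix}0&0\\1&0\end{psmallmatrix}\,p(x)^2+\lambda\,p(x)^3$ with $\lambda\in\{2,\ \begin{psmallmatrix}2&0\\0&0\end{psmallmatrix}\}$, computing $p(r)^2$ entrywise modulo $4$. If that fails, I would let a computer search over low-degree combinations; the verification over all $128$ elements is a finite check and can be stated as such.
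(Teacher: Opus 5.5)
\textbf{There is a gap: no witness is produced, and the one concrete candidate you name is not in $\Nul(S)$.} Your proposal outlines a search rather than a proof. Write an arbitrary $Y\in\Jac(S)$ as $Y=\begin{psmallmatrix}2\alpha&2\mu\\ \gamma&2\delta\end{psmallmatrix}$. Multiplying out modulo $4$ gives $Y^2=\begin{psmallmatrix}2\mu\gamma&0\\ 2\gamma(\alpha+\delta)&2\mu\gamma\end{psmallmatrix}$ and $Y^3=2\mu\gamma^2E_{21}$. Thus $Y^3\in2\Jac(S)$, so $2Y^3=0$ and $2E_{11}Y^3=0$. Consequently, for $\lambda\in\{2,2E_{11}\}$ your $F=E_{21}p(x)^2+\lambda p(x)^3$ satisfies $F(A)=E_{21}p(A)^2=2\mu\gamma E_{21}$. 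At $A=2E_{12}+E_{21}$ one finds $p(A)=\begin{psmallmatrix}2&2\\ 3&2\end{psmallmatrix}$ and $p(A)^2=2I$, hence $F(A)=2E_{21}\neq0$. The factor $2$ on the cube annihilates exactly the term that was supposed to cancel $E_{21}p^2$. Falling back on an unspecified computer search leaves the existence of a witness unproved.

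\textbf{The missing observation.} Because $\gamma^2\equiv\gamma\pmod 2$, the identity $Y^3=E_{21}Y^2$ holds for every $Y\in\Jac(S)$. This identity is uniform on the radical, so no case split on $(a\bmod 2,\,d\bmod 2)$ is needed. One only uses $P(A)\in\Jac(S)$ for $P=x^2+x$, and your $x^2-x$ works equally well. Hence $H=P^3-E_{21}P^2\in\Nul(S)$. This is your shape with $\lambda=\pm1$; in fact any $\lambda\in S$ with odd $(2,2)$-entry works.

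The paper then takes $e=E_{22}$. Since $P$ has central coefficients and $E_{21}E_{22}=0$, one gets $He=E_{22}P(x)^3$. For $r=2E_{12}+E_{21}$ one has $P(r)^2=2I$ and $(He)(r)=E_{22}P(r)^3=2E_{21}\neq0$. Your choice $e=E_{11}$ would serve as well, because $HE_{11}=H-HE_{22}$.

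Separately, the product you offer as evidence that $\Jac(S)^3\neq0$ is zero, since $2E_{12}\cdot E_{21}\cdot2E_{12}=4E_{12}=0$. The product $E_{21}\cdot2E_{12}\cdot E_{21}=2E_{21}$ is a correct example.
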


\begin{proof}
The surjective ring homomorphism
\[
 S\longrightarrow\F_2\times\F_2,
 \qquad
 \begin{pmatrix}a&b\\c&d\end{pmatrix}
 \longmapsto(\bar a,\bar d)
\]
has kernel
\[
 J=
 \left\{
 \begin{pmatrix}2\alpha&2\mu\\\gamma&2\delta\end{pmatrix}:
 \alpha,\mu,\delta\in\F_2,\ \gamma\in\Z_4
 \right\}.
\]
Set
\[
 p=2E_{11},\quad q=2E_{22},\quad
 u=2E_{12},\quad v=E_{21},\quad w=2E_{21}.
\]
Then, as an additive group,
\[
 J=\langle p\rangle\oplus\langle q\rangle
   \oplus\langle u\rangle\oplus\langle v\rangle,
 \qquad \operatorname{ord}(p)=\operatorname{ord}(q)
 =\operatorname{ord}(u)=2,\quad \operatorname{ord}(v)=4.
\]
Direct multiplication in $S$ gives
\[
 uv=p,\qquad vu=q,\qquad qv=vp=w,
\]
and all other products among $p,q,u,v$ are zero.  Consequently,
\[
 J^2=\langle p,q,w\rangle_{\F_2},
 \qquad J^3=\langle w\rangle_{\F_2},
 \qquad J^4=0.
\]
Since $J$ is nilpotent, $J\subseteq\Jac(S)$.  On the other hand,
$S/J\cong\F_2\times\F_2$ is semisimple, so $\Jac(S)\subseteq J$.  Therefore
$J=\Jac(S)$.

Put
\[
 P(x)=x^2+x,\qquad \beta=E_{21},
 \qquad H(x)=P(x)^3-\beta P(x)^2.
\]
For $A\in S$, its image in $S/J\cong\F_2\times\F_2$ is idempotent.
Consequently, $Y=A^2+A$ belongs to $J$ and has the form
\[
 Y=\begin{pmatrix}2\alpha&2\mu\\\gamma&2\delta\end{pmatrix}.
\]
Direct multiplication modulo $4$ gives
\[
 Y^2=
 \begin{pmatrix}
  2\mu\gamma&0\\
  2\gamma(\alpha+\delta)&2\mu\gamma
 \end{pmatrix},\quad
 Y^3=\begin{pmatrix}0&0\\2\mu\gamma^2&0\end{pmatrix},\quad
 \beta Y^2=\begin{pmatrix}0&0\\2\mu\gamma&0\end{pmatrix}.
\]
Since $\gamma^2\equiv\gamma\pmod2$, these formulas give
$Y^3=\beta Y^2$.  Because the coefficients of $P$ are central,
$(P(x)^k)(A)=P(A)^k=Y^k$ for $k=2,3$.  Hence
$H(A)=Y^3-\beta Y^2=0$, and therefore $H\in\Nul(S)$.

Let
\[
 e=E_{22},\qquad r=2E_{12}+E_{21}.
\]
Then $\beta e=0$, $r^2=2I$, and, for $\theta=P(r)=2I+r$,
\[
 \theta^2=2I,\qquad \theta^3=2E_{21}\neq0.
\]
Since $P(x)$ has central coefficients, it commutes with $e$ in $S[x]$.
Together with $\beta e=0$, this gives
\[
 H(x)e=eP(x)^3,
 \]
and hence
\[
 (He)(r)=e\theta^3=2E_{21}\neq0.
\]
Thus $He\notin\Nul(S)$.
\end{proof}

The preceding computation also shows that the bound in
Theorem~\ref{thm:cardinality} is sharp.  Indeed, for $e=E_{22}$ and
$f=E_{11}$, both
$eJf$ and $fJe$ are nonzero, while $J^3\neq0$ and $|J|=32$.

\begin{corollary}\label{cor:noquotient}
No quotient of $S$ is isomorphic to $\cR_4$.
\end{corollary}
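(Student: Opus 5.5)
To prove Corollary~\ref{cor:noquotient}, I would compare orders and characteristics. Both $S$ and $\cR_4$ have order $128$. So any surjective ring homomorphism $S\to\cR_4$ is a bijection between finite sets of equal size, and hence an isomorphism. It therefore suffices to show that $S\not\cong\cR_4$.

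The quickest invariant is the characteristic. In $S$ the identity $I$ has additive order $4$, since $2I\neq0$ in $\Mat_2(\Z_4)$. In $\cR_4$ every entry lies in $\F_2$, so $2\cdot1=0$ and the characteristic is $2$. A ring isomorphism preserves the additive order of the identity, so no isomorphism exists.

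I would also note the alternative phrasing: a proper quotient $S/I$ with $I\neq0$ has order less than $128$ and so cannot be $\cR_4$, while the trivial quotient $I=0$ is excluded by the characteristic. There is no real obstacle here. The only point needing care is justifying $|\cR_4|=128$: we have $|\Tri_2(\F_2)|=8$ and $|\Mat_2(\F_2)|=16$, giving $8\cdot16=128$. The order $|S|=128$ was already recorded above.
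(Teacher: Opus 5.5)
Your proof is correct and follows the same route as the paper's: since both rings have order $128$, any quotient $S/I\cong\cR_4$ forces $I=0$, and then $S\not\cong\cR_4$ because the characteristics are $4$ and $2$. Your check that $|\cR_4|=8\cdot16=128$ is a small detail the paper only asserts.
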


\begin{proof}
Both rings have order $128$.  If $S/I\cong\cR_4$, equality of orders forces
$I=0$, so $S\cong\cR_4$.  This is impossible because $S$ and $\cR_4$ have
characteristics $4$ and $2$, respectively.
\end{proof}

\begin{remark}
Theorem~\ref{thm:order} and Corollary~\ref{cor:noquotient} also settle,
respectively, Questions~1.3 and~5.12 posed in \cite{Werner2026}.
\end{remark}

\end{document}